\newcolumntype{Y}{>{\centering\arraybackslash}X}
\DeclareSymbolFont{cyrletters}{OT2}{wncyr}{m}{n}
\DeclareMathSymbol{\Sha}{\mathalpha}{cyrletters}{"58}
\newcommand{\genlegendre}[4]{%
  \genfrac{(}{)}{}{#1}{#3}{#4}%
  \if\relax\detokenize{#2}\relax\else_{\!#2}\fi
}
\newtheorem{thm}{Theorem}[section]
\newtheorem{lem}[thm]{Lemma}
\newcommand{\BIG}{\bBigg@{2}}
\newcommand{\vast}{\bBigg@{3}}
\newcommand{\Vast}{\bBigg@{5}}
\numberwithin{equation}{section}
\begin{document}
\setlength{\arrayrulewidth}{0.1mm}



\title[Heron triangle and Selmer group]{Heronian elliptic curves and the size of the $2$-Selmer group}

\author[Debopam Chakraborty]{Debopam Chakraborty}
\address{Department of Mathematics\\ BITS-Pilani, Hyderabad campus\\
Hyderabad, INDIA}
\email{debopam@hyderabad.bits-pilani.ac.in}

\author[Vinodkumar Ghale]{Vinodkumar Ghale}
\address{Department of Mathematics\\ BITS-Pilani, Hyderabad campus\\
Hyderabad, INDIA}
\email{p20180465@hyderabad.bits-pilani.ac.in}

\date{}

\subjclass[2020]{Primary 11G05, 11G07; Secondary 51M04}
\keywords{Elliptic curve; Selmer group; Heron triangle}

\maketitle

\section*{Abstract}
\noindent A generalization of the congruent number problem is to find positive integers $n$ that appear as the areas of Heron triangles. Selmer group of a congruent number elliptic curve has been studied quite extensively. Here, we look into the $2$-Selmer group structure for Heronian elliptic curves associated with Heron triangles of area $n$ and one of the angle $\theta$ such that $\tan \frac{\theta}{2} = n^{-1}$ and $n^{2}+1=2q$ for some prime $q$.

\section{Introduction}
\noindent A square-free positive integer $n$ is called a \textit{congruent number} if it appears as an area of a right-angle triangle with rational sides. The problem of identifying such positive integers $n$ is known as \textit{the congruent number problem}. The congruent number problem boils down to the rank calculation of the elliptic curve $E: y^{2} = x^{3} - n^{2}x$, known as \textit{the congruent number elliptic curve}. Selmer group of an elliptic curve holds crucial information about the rank of the elliptic curve (cf. \cite{Silverman} for a detailed definition of the Selmer group along with full $n$-descent method for the computation of the $n$-Selmer group of an elliptic curve). For the congruent number elliptic curve, the size of the Selmer group has been discussed in detail in the works of various authors, most notably in a series of two papers by Heath-Brown (cf. \cite{Brown1} and \cite{Brown2}). A simple but elegant way to compute the $2$-Selmer rank follows from the appendix of \cite{Brown2}, through the works of Monsky, which turns the problem of finding the $2$-Selmer rank in a problem of linear algebra over $\mathbb{F}_{2}$.\\
\noindent A \textit{Heron triangle} is a triangle with rational sides without the constraint of being a right-angle triangle. An immediate generalization of the congruent number problem is to identify a positive square-free integer $n$ as the area of a Heron triangle. The work of Goins and Maddox (cf. \cite{Goins}) gave rise to an elliptic curve, known as \textit{the Heronian elliptic curve} associated with a Heron triangle. In its general form the curve is $E: y^{2} = x(x-n\tau)(x+n {\tau}^{-1})$ where $\tau = \tan \frac{\theta}{2}$ is one of the angles $\theta$ of the Heron triangle under consideration. Not much is known yet about the Selmer group of a Heronian elliptic curve. In a recent work (cf. \cite{Chakraborty}), Chakraborty et al. gave a family of Heronian elliptic curves with $2$-Selmer rank precisely $1$, associated with the Heron triangle of area $2^{m}p$ and $\tau = \tan \frac{\theta}{2} = 2^{m}$ for some integer $m$ and prime $p$. A lack of Monsky-type matrices (cf. Appendix of \cite{Brown2}) that is available for the congruent number elliptic curves makes the problem of finding the $2$-Selmer rank mostly dependent on a ``full $2$-descent'' here. \\
\noindent The main results of this article give an explicit group structure for the $2$-Selmer group of $E_{n}: y^{2} = x(x-1)(x+n^{2})$ associated with a Heron triangle of area $n$ and one of the angles $\theta$ such that $\tau = \tan \frac{\theta}{2} = n^{-1}$ such that $n^{2}+1 = 2q$ for some prime $q$. A rank computation for a similar curve with prime $n$ was carried out in \cite{Ghale}. The $2$-Selmer group of an elliptic curve $E$ is denoted by $S^{(2)}(E/ \mathbb{Q})$. The $2$-Selmer rank of $E$, denoted by $s^{(2)}(E/ \mathbb{Q})$, is defined as $|S^{(2)}(E/ \mathbb{Q})| = 2^{{k}+s^{(2)}(E/ \mathbb{Q})}$ where $k=2$ or $4$ for a Heronian elliptic curve $E$ (cf. \cite{Goins}). For a positive integer $n$, we denote the number of $k \pmod 8$ prime factors of $n$ by $\Omega_{k,n}$. We now state the results below.
\begin{thm}\label{thm1}
For a square-free odd integer $n$ such that $n^{2}+1 = 2q$ for some prime $q$, let $E_{n}: y^{2} = x(x-1)(x+n^{2})$ denotes the Heronian elliptic curve associated with the Heron triangle of area $n$ and one of the angles $\theta$ such that $\tan(\frac{\theta}{2}) = n^{-1}$. Then, \\  
$[a]$ $s^{(2)}(E_{n}/ \mathbb{Q}) \cong (\mathbb{Z}/ 2 \mathbb{Z})^{\Omega_{1,n}+1}$ if $\Omega_{5,n} = 0$.\\
$[b]$ $s^{(2)}(E_{n}/ \mathbb{Q}) \cong (\mathbb{Z}/ 2 \mathbb{Z})^{\left(\Omega_{1,n} + \frac{\Omega_{5,n}(\Omega_{5,n}-1)}{2}\right)}$ if $\Omega_{5,n}  \neq 0$.
\end{thm}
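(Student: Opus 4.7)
The plan is to carry out a complete $2$-descent on $E_n$, exploiting that all three non-trivial $2$-torsion points $(0,0)$, $(1,0)$, $(-n^2,0)$ are $\mathbb{Q}$-rational. Set $S := \{\infty, 2, q\} \cup \{p : p \mid n\}$ and write $n = p_1 \cdots p_r$ for the (distinct) odd prime factorization; then $\mathbb{Q}(S,2)$ has $\mathbb{F}_2$-dimension $r+3$, generated by $-1, 2, q, p_1, \dots, p_r$. I would identify $S^{(2)}(E_n/\mathbb{Q})$ with the subgroup of pairs $(d_1, d_2) \in \mathbb{Q}(S,2)^2$ for which the homogeneous space
\[
C_{d_1, d_2}:\quad d_1 u^2 - d_2 v^2 = 1,\qquad d_1 u^2 - d_1 d_2 w^2 = -n^2
\]
has a $\mathbb{Q}_\nu$-point for every $\nu \in S \cup \{\infty\}$; this gives a concrete embedding of the Selmer group into $(\mathbb{F}_2)^{2(r+3)}$.

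Next I would translate local solvability at each $\nu$ into Hilbert-symbol identities, converting the computation into the nullity of an explicit matrix $M = M(n) \in M_*(\mathbb{F}_2)$ whose entries are Legendre symbols among $\{-1, 2, q, p_1, \dots, p_r\}$. The relation $n^2+1 = 2q$ implies $n^2 \equiv -1 \pmod q$ (so $q \equiv 1 \pmod 4$) and $2q \equiv 1 \pmod{p_i}$, allowing closed-form evaluation of $\legendre{q}{p_i}$ in terms of $\legendre{2}{p_i}$ and of $\legendre{-1}{q}$, $\legendre{p_i}{q}$. The local analysis at $\infty$ and at $2$ (using that $n$ is odd, together with the explicit residues of $-n^2$ and $-n^2-1$ modulo $8$) contributes a small, $r$-independent block to $M$.

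The decisive observation is that the contribution of each generator $p_i$ to $M$ depends on the class of $p_i \pmod 8$, through $\legendre{-1}{p_i}$ and $\legendre{2}{p_i}$. A careful case analysis should show that generators with $p_i \equiv 3$ or $7 \pmod 8$ are completely determined by the other local constraints and contribute no free directions; generators with $p_i \equiv 1 \pmod 8$ (where both $-1$ and $2$ are squares) each contribute one free direction; and generators with $p_i \equiv 5 \pmod 8$ interact among themselves through an explicit symmetric bilinear form on $\mathbb{F}_2^{\Omega_{5,n}}$. Symmetry of this form is guaranteed by quadratic reciprocity, which applies cleanly because $p_i \equiv 1 \pmod 4$ whenever $p_i \equiv 5 \pmod 8$.

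The main obstacle is the combinatorial analysis in case $[b]$. When $\Omega_{5,n} = 0$, the matrix $M$ reduces to an essentially diagonal shape on the $p$-block plus a rank-$2$ interaction among $\{-1, 2, q\}$, and $\dim \ker M$ is readily seen to produce the group $(\mathbb{Z}/2)^{\Omega_{1,n}+1}$ claimed in $[a]$. When $\Omega_{5,n} \geq 1$, the additional Hilbert-symbol constraints absorb the extra $+1$ present in case $[a]$ (which came from a specific relation tying together $-1, 2$, and $q$), and the computation of $\dim\ker M$ then reduces to enumerating solutions of the symmetric form on $\mathbb{F}_2^{\Omega_{5,n}}$; this should yield exactly $\binom{\Omega_{5,n}}{2}$ further kernel directions arising from pairs $(p_i, p_j)$ with $p_i, p_j \equiv 5 \pmod 8$. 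Verifying this count rigorously requires tracking all Hilbert-symbol relations at $\{\infty, 2, q\}$ together with the interaction of the Legendre symbols $\legendre{p_i}{p_j}$, and constitutes the technical heart of the proof.
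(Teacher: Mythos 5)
Your overall framework --- the same $2$-descent and the same homogeneous spaces, but with the local conditions repackaged as an $\mathbb{F}_2$-linear system of Hilbert/Legendre-symbol constraints in the spirit of Monsky --- is a genuinely different route from the paper, which never forms such a matrix: it first rules out classes prime by prime (its Lemma 2.2: $b_1$ supported on primes $\equiv 1,5 \pmod 8$, $b_2\in\{1,q\}$, parity of the $5\bmod 8$ factors of $b_1$), and then proves everywhere-local solvability of the surviving spaces directly, via explicit Hensel lifting at $l=2,3$ and at the primes dividing $n$, with the Hasse--Weil bound handling all other primes (its Section 3). Your identification of the arithmetic input is correct and matches the paper: $2q\equiv 1\pmod{p_i}$ gives $\big(\tfrac{q}{p_i}\big)=\big(\tfrac{2}{p_i}\big)$, and $q\equiv 1\pmod 4$ plus reciprocity produces exactly the $p_i\equiv 1$ versus $p_i\equiv 5\pmod 8$ dichotomy the paper exploits.

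As it stands, though, the proposal has two genuine gaps. First, everything that carries the burden of proof is deferred (``a careful case analysis should show'', ``this should yield''): you never actually determine the local images at $2$, $q$ and the primes dividing $n$, and in particular you never establish the existence half of the theorem, namely that the spaces attached to $(p,1)$ with $p\equiv 1\pmod 8$, to $(p_ip_j,1)$ with $p_i,p_j\equiv 5\pmod 8$, and to $(1,q)$ when $\Omega_{5,n}=0$ are everywhere locally solvable; this is precisely the content of the paper's Lemmas 3.1--3.4 and does not follow formally from symbol bookkeeping. Second, and more seriously, your announced count in case $[b]$ is inconsistent with your own framework: the Selmer group is an $\mathbb{F}_2$-subspace of $\mathbb{Q}(S,2)^2$ (the kernel of your matrix $M$), and classes supported on pairs $p_ip_j$ with $p_i,p_j\equiv 5\pmod 8$ span only the even-weight subspace of $\mathbb{F}_2^{\Omega_{5,n}}$, which has dimension $\Omega_{5,n}-1$; a kernel cannot contain $\binom{\Omega_{5,n}}{2}$ independent ``directions arising from pairs''. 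So if your computation were completed it would return the exponent $\Omega_{1,n}+\Omega_{5,n}-1$ (for $\Omega_{5,n}\geq 1$), not $\Omega_{1,n}+\binom{\Omega_{5,n}}{2}$, and the two differ as soon as $\Omega_{5,n}\geq 3$. (The same tension exists between the theorem's statement and the paper's own proof, whose exhibited generators $(p_i,1)$ and $(t_it_j,1)$ span a space of dimension $\Omega_{1,n}+\Omega_{5,n}-1$; the formulas agree only for $\Omega_{5,n}\leq 2$, which covers every tabulated example.) You would need either to prove the $\Omega_{1,n}+\Omega_{5,n}-1$ count or to produce Selmer elements outside the span of those classes; as written, the plan cannot close on the stated formula.
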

\noindent For an even integer $n$, we state the following result below. We assume $n^{2}+1 = q$ for a prime $q$ now. 
\begin{thm}\label{thm2}
    For a square-free even integer $n$ such that $n^{2}+1 = q$ for some prime $q$, let $E_{n}: y^{2} = x(x-1)(x+n^{2})$ is the Heronian elliptic curve as above. Then $s^{(2)}(E_{n}/ \mathbb{Q}) \cong (\mathbb{Z}/ 2 \mathbb{Z})^{\left (\Omega_{1,n} + \Omega_{5,n}\right)}$.
\end{thm}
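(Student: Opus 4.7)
The plan is to carry out a full $2$-descent on $E_n$ in the spirit of Theorem~\ref{thm1}, adjusted for $n$ being even. Since $E_n[2](\mathbb{Q})$ consists of the three nontrivial points with $x$-coordinates $0$, $1$, and $-n^2$, the descent yields an injection
\[
E_n(\mathbb{Q})/2E_n(\mathbb{Q}) \hookrightarrow \mathbb{Q}^\times/(\mathbb{Q}^\times)^2 \times \mathbb{Q}^\times/(\mathbb{Q}^\times)^2
\]
sending $(x,y) \mapsto (x, x-1)$, with the third component $x+n^2$ equal to $d_1 d_2$ modulo squares. Writing $n = 2p_1\cdots p_k$, the differences $e_i-e_j$ are $\pm 1$, $\pm n^2$, $\pm q$, so the components $d_1, d_2$ of any Selmer element are supported on $S = \{-1,\, 2,\, p_1, \ldots, p_k,\, q\}$. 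Hence the candidate set is an $\mathbb{F}_2$-vector space of dimension $2(k+3)$.

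I would then translate Selmer membership into simultaneous local solvability of the homogeneous spaces
\begin{align*}
d_1 u_1^2 - d_2 u_2^2 &= 1,\\
d_1 u_1^2 - d_1 d_2 u_3^2 &= -n^2
\end{align*}
over each completion $\mathbb{Q}_v$ with $v \in S \cup \{\infty\}$. Via Hilbert symbols, each local condition becomes an $\mathbb{F}_2$-linear relation on the exponent vectors of $d_1, d_2$ with respect to the basis $\{-1, 2, p_1, \ldots, p_k, q\}$. The key inputs here will be: the congruence $q \equiv 5 \pmod 8$, which follows from $n \equiv 2 \pmod 4$ and $n^2 \equiv 4 \pmod{32}$, so that $\left(\frac{-1}{q}\right) = 1$ and $\left(\frac{2}{q}\right) = -1$; the reciprocity $\left(\frac{p_i}{q}\right) = \left(\frac{q}{p_i}\right) = 1$ which follows from $q \equiv 1 \pmod{p_i}$; and the residue class of $p_i \pmod 8$ controlling $\left(\frac{-1}{p_i}\right)$ and $\left(\frac{2}{p_i}\right)$.

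Assembling all local conditions into one block matrix over $\mathbb{F}_2$ whose kernel is $S^{(2)}(E_n/\mathbb{Q})$, I would order its rows by place and its columns by the basis $\{-1, 2, p_1, \ldots, p_k, q\}$. As in the proof of Theorem~\ref{thm1}, the rows coming from $\infty$, from $q$, and from the prime $2$ should collapse the candidate space by a codimension independent of $k$, while the rows coming from the odd primes $p_i$ yield a symmetric submatrix whose entries are the Legendre symbols $\left(\frac{p_i}{p_j}\right)$ together with $\left(\frac{2}{p_i}\right)$ and $\left(\frac{-1}{p_i}\right)$. The main obstacle will be the analysis at $v = 2$: because $n$ is now even, the local image of the descent at $2$ differs structurally from the odd case handled in Theorem~\ref{thm1}, and determining which combinations of $\{-1, 2, p_i, q\}$ remain locally soluble at $2$ requires a careful expansion using $n^2 \equiv 4 \pmod{32}$ and $q \equiv 5 \pmod 8$. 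Once this submatrix is identified, a direct kernel computation should show that the surviving free parameters correspond exactly to the primes $p_i$ with $p_i \equiv 1$ or $p_i \equiv 5 \pmod 8$, contributing $\Omega_{1,n} + \Omega_{5,n}$ independent $\mathbb{Z}/2\mathbb{Z}$ factors and yielding the claimed structure of the $2$-Selmer group.
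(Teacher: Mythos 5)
Your outline points in a reasonable direction, but as written it has a genuine gap: the two steps that actually constitute the proof are both deferred. For the upper bound you never show which pairs $(d_1,d_2)$ are eliminated --- in particular that $b_2=1$, that $q\nmid b_1$ and $2\nmid b_1$ (the latter fails precisely at the place $q$, because $q\equiv 5\pmod 8$ gives $\left(\frac{2}{q}\right)=-1$), and that no prime $p\equiv 3,7\pmod 8$ dividing $n$ can divide $b_1$ (via $\left(\frac{-1}{p}\right)=-1$). For the lower bound you never verify that each $(p_i,1)$ with $p_i\equiv 1,5\pmod 8$ is everywhere locally solvable; you explicitly flag the place $v=2$ as ``the main obstacle'' and leave it unresolved, and the places $v=q$, $v=3$ and $v=p_j$ for the other prime divisors of $n$ also require explicit work (in the paper this is done by exhibiting simple roots and applying Hensel's lemma at the finitely many bad primes, together with a Hasse--Weil count at the remaining primes). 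A proposal whose decisive computations are all phrased as ``should show'' and ``would yield'' does not establish the theorem.

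There is also a structural problem with the matrix you anticipate. You predict that the rows at the odd primes $p_j\mid n$ produce a symmetric block with entries $\left(\frac{p_i}{p_j}\right)$, so that the kernel dimension would a priori depend on these cross symbols; but the theorem asserts a rank depending only on $\Omega_{1,n}+\Omega_{5,n}$, and indeed in the actual local analysis no condition involving $\left(\frac{p_i}{p_j}\right)$ arises, because at $v=p_j$ one may take $z_1\equiv 0\pmod{p_j}$ in the homogeneous space for $(p_i,1)$, so solvability there only needs $p_j\equiv 1\pmod 4$ (which is automatic for the surviving generators) and never sees $p_i$ modulo $p_j$. Moreover, the Monsky-type linear-algebra encoding you want to invoke is not available off the shelf for Heronian curves (the paper emphasizes exactly this); to justify that each local condition is a linear relation of the shape you describe you would have to compute the local images at every bad place, which is the same work as the explicit homogeneous-space analysis you are trying to avoid. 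The paper's proof is much more direct: Lemma~\ref{lem3} pins down the candidates to $(p,1)$ with $p\equiv 1,5\pmod 8$, $p\mid n$, and Lemma~\ref{lem5} shows all of these are everywhere locally solvable, giving the rank $\Omega_{1,n}+\Omega_{5,n}$ at once.
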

\noindent We note that the above results can be used as a tool to compute the Heronian elliptic curve with arbitrarily large $2$-Selmer rank. This is because the $2$-Selmer rank here is directly related to the number of prime factors of $n$ of the form $1, 5$ modulo $8$. 
\section{Bounding the $2$-Selmer rank} 
\noindent In this section, we look into the $2$-Selmer group $S^{(2)}(E_{n}/ \mathbb{Q})$. We note that under the $2$-descent method, each element $(b_{1},b_{2})$ in $S^{(2)}(E_{n}/ \mathbb{Q})$ corresponds to the following homogeneous space with local solutions everywhere. 
\begin{align}
& b_1z_1^2 - b_2z_2^2 = 1, \label{eq22}\\
& b_1z_1^2 - b_1b_2z_3^2 = -n^{2}, \label{eq23}
\end{align}
such that $(z_1,z_2,z_3) \in \mathbb{Q}^* \times \mathbb{Q}^* \times \mathbb{Q}$, $b_{1}, b_{2}$ are square-free integers whose only prime factors are $2, q$ and prime factors of $n$. A detailed version of the $2$-descent method is available in \cite{Silverman}. We note that the image of $E_{n}(\mathbb{Q})_{\text{tors}}$ under the $2$-descent map contains $\{(1,1), (1,2q), (-1,-1), (-1, -2q)\}$ when $n$ is odd and  $\{(1,1), (1,q), (-1,-1), (-1, -q)\}$ when $n$ is even (cf. \cite{Chakraborty} for a similar calculation). We start with the following result regarding $l$-adic solutions of the homogeneous space given by equations (\ref{eq22}) and (\ref{eq23}). The proof is quite straightforward but is included for the sake of the completeness of the paper.\\
\begin{lem}\label{lem1}
Let $(z_{1},z_{2},z_{3})$ be a solution to the homogeneous space given by equations (\ref{eq22}) and (\ref{eq23}). Then for each prime $l < \infty$ and all $i \in \{1,2,3\}$, either $v_{l}(z_{i}) \geq 0$ or $v_{l}(z_{i}) = -k$ for some positive integer $k$.
\end{lem}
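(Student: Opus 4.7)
Fix a prime $l < \infty$. The intended content of the lemma (the literal reading being vacuous since every rational has integer $l$-adic valuation) is that at $l$, either all three of $v_{l}(z_{1}), v_{l}(z_{2}), v_{l}(z_{3})$ are nonnegative, or they are all equal to the same $-k$ with $k \geq 1$. Suppose, without loss of generality, $v_{l}(z_{1}) = -a$ with $a \geq 1$; the aim is to show $v_{l}(z_{2}) = v_{l}(z_{3}) = -a$ as well.

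The first step uses equation (\ref{eq22}). Since $b_{1}$ is square-free, $v_{l}(b_{1}) \in \{0,1\}$, so $v_{l}(b_{1}z_{1}^{2}) = v_{l}(b_{1}) - 2a \leq -1$. The right-hand side of (\ref{eq22}) has $v_{l}=0$, so the ultrametric inequality forces $v_{l}(b_{2}z_{2}^{2}) = v_{l}(b_{1}z_{1}^{2})$; otherwise the minimum valuation would survive on the left and be negative. Writing this out gives
\[
v_{l}(b_{1}) - v_{l}(b_{2}) \;=\; 2\bigl(v_{l}(z_{2}) - v_{l}(z_{1})\bigr),
\]
whose right side is even while its left side lies in $\{-1,0,1\}$. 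Hence $v_{l}(b_{1}) = v_{l}(b_{2})$ and $v_{l}(z_{2}) = v_{l}(z_{1}) = -a$.

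The second step applies the same valuation-matching principle to equation (\ref{eq23}). Using $v_{l}(b_{1}) = v_{l}(b_{2})$, one has $v_{l}(b_{1}b_{2}z_{3}^{2}) = 2v_{l}(b_{1}) + 2v_{l}(z_{3})$, while the right-hand side $-n^{2}$ has valuation $2v_{l}(n) \geq 0$. A trichotomy on whether $v_{l}(b_{1}z_{1}^{2})$ is less than, greater than, or equal to $v_{l}(b_{1}b_{2}z_{3}^{2})$ is needed: the two strict cases force $v_{l}(b_{1}) \geq 2$ or contradict $v_{l}(n) \in \mathbb{Z}_{\geq 0}$, and so are ruled out by the square-freeness of $b_{1}$ and integrality of $v_{l}(n)$. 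In the equality case, the identity $v_{l}(b_{1}) - 2a = 2v_{l}(b_{1}) + 2v_{l}(z_{3})$ combined with $v_{l}(z_{3}) \in \mathbb{Z}$ pins down $v_{l}(b_{1}) = 0$, and hence $v_{l}(b_{2}) = 0$ and $v_{l}(z_{3}) = -a$, closing the argument.

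The main obstacle is the second step: whereas (\ref{eq22}) has a unit on the right, the right-hand side of (\ref{eq23}) can have positive $l$-adic valuation at primes dividing $n$, so one cannot immediately equate the two terms on the left. The trichotomy must be carried out explicitly, and it is precisely the parity constraint coming from $b_{1}, b_{2}$ being square-free that closes each sub-case.
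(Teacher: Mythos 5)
Your reading of the lemma's intended content (all three valuations nonnegative, or all equal to a common $-k$) matches how the paper actually uses it, and the two steps you do carry out are correct and in the same spirit as the paper's (sketchier) argument: compare valuations across each equation and use square-freeness of $b_{1},b_{2}$ to kill the odd-parity possibilities. However, the opening ``without loss of generality $v_{l}(z_{1})=-a$'' is not legitimate, and this is a genuine gap: the system is not symmetric in $z_{1},z_{2},z_{3}$ (they carry the distinct coefficients $b_{1}$, $b_{2}$, $b_{1}b_{2}$, and $z_{3}$ only enters (\ref{eq23}), whose right-hand side $-n^{2}$ may be divisible by $l$). So your argument only proves the implication ``$v_{l}(z_{1})<0 \Rightarrow$ all three valuations are equal''; it says nothing about the configurations $v_{l}(z_{2})<0\leq v_{l}(z_{1})$ and $v_{l}(z_{3})<0\leq v_{l}(z_{1})$, and the latter is exactly the case the paper's proof spends its effort on (showing $v_{l}(z_{1})\leq v_{l}(z_{3})$ whenever $v_{l}(z_{3})<0$).

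The missing cases can be closed, but they need their own (short) arguments rather than an appeal to symmetry. If $v_{l}(z_{2})<0\leq v_{l}(z_{1})$, then in (\ref{eq22}) the term $b_{2}z_{2}^{2}$ has strictly negative valuation while $b_{1}z_{1}^{2}$ does not, so the left side has negative valuation, contradicting $v_{l}(1)=0$. If $v_{l}(z_{1}),v_{l}(z_{2})\geq 0$ but $v_{l}(z_{3})<0$, then (\ref{eq23}) written as $b_{1}b_{2}z_{3}^{2}=b_{1}z_{1}^{2}+n^{2}$ forces $v_{l}(b_{1}b_{2})+2v_{l}(z_{3})\geq 0$, hence $v_{l}(b_{1})=v_{l}(b_{2})=1$ and $v_{l}(z_{3})=-1$; but then reducing (\ref{eq22}) modulo $l$ gives $l\mid 1$, a contradiction --- note this case genuinely needs both equations, which is why the symmetry claim cannot be waved through. (A small secondary wobble: in your trichotomy for (\ref{eq23}), both strict inequalities already contradict $v_{l}(-n^{2})\geq 0$ directly; the clause about forcing $v_{l}(b_{1})\geq 2$ is not what happens, though nothing downstream depends on it.) With the two extra cases added, your proof is complete and somewhat more explicit than the paper's, which delegates the $z_{1},z_{2}$ cases to Lemma 3.1 of the cited earlier work.
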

\begin{proof}
For $i \in \{1,2\}$, $v_{l}(z_{i}) < 0$ immediately implies $v_{l}(z_{i}) = -k$ for some positive integer $k$ (cf. Lemma $3.1$ in \cite{Chakraborty} for a similar and detailed proof). If $v_{l}(z_{3}) < 0$ and $v_{l}(z_{1}) > v_{l}(z_{3})$, then either from (\ref{eq23}) or from subtracting (\ref{eq23}) from (\ref{eq22}), we get $l^{3}$ divides $b_{1}b_{2}$, an absurdity. Hence $v_{l}(z_{1}) \leq v_{l}(z_{3}) < 0$, and using the first case of the proof, we conclude.
\end{proof}
\noindent Noting that we will only look into possible $(b_{1},b_{2}) \in S^{(2)}(E_{n}/ \mathbb{Q})/ Im(E_{n}(\mathbb{Q})_{\text{tors}})$ under the $2$-descent map, without loss of generality, we fix the following assumptions. \\
\noindent \textbf{Assumption 1:} $b_{1} > 0, b_{2} > 0$ always (due to $l = \infty$).\\
\noindent \textbf{Assumption 2:} $b_{2}$ is odd if $n$ is odd and $b_{2} \not \equiv 0 \pmod q$ if $n$ is even. 
\begin{lem}\label{lem2}
    Let $n$ is odd and $(b_{1},b_{2}) \in S^{(2)}(E_{n}/ \mathbb{Q})$. Then for an arbitrary prime $p$, \\
    $[a]$ $b_{1} \equiv 0 \pmod p \implies p \equiv 1,5 \pmod 8$.\\
    $[b]$ $b_{2} \in \{1,q\}$. Moreover, $n \equiv 0 \pmod p$, $p \equiv 5 \pmod 8$ $\implies$ $(1,q) \not \in S^{(2)}(E_{n}/ \mathbb{Q})$.\\
    $[c]$ The number of prime factors of $b_{1}$ of the form $5$ modulo $8$ must always be even.
\end{lem}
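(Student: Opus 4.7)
All three parts follow from $p$-adic analysis of the homogeneous equations \eqref{eq22} and \eqref{eq23}, together with the identity $n^{2}+1=2q$. Subtracting \eqref{eq22} from \eqref{eq23} yields the useful auxiliary relation $b_{1} b_{2} z_{3}^{2} - b_{2} z_{2}^{2} = 2q$, which I will use throughout.

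For $[a]$, I split by the nature of the prime $p \mid b_{1}$. The case $p=q$ is immediate: $n$ odd forces $n^{2}\equiv 1\pmod 8$, so $q=(n^{2}+1)/2\equiv 1\pmod 4$, hence $q\equiv 1,5\pmod 8$. The case $p=2$ is excluded by a $2$-adic valuation argument: assuming $b_{1}\equiv 2\pmod 4$, parity of $2$-adic valuations in \eqref{eq22} forces $z_{1},z_{2}\in\mathbb{Z}_{2}$, and then reducing \eqref{eq22} and the auxiliary relation modulo $4$ gives incompatible congruences in every parity sub-case for $(z_{1},z_{2},z_{3})$. For an odd prime $p\mid n$ with $p\mid b_{1}$, first I rule out $p\mid b_{2}$ (by a parity-of-valuations argument on \eqref{eq22}), so $z_{1},z_{2}$ are $p$-adic integers and \eqref{eq22} mod $p$ gives $-b_{2}z_{2}^{2}\equiv 1$, i.e.\ $-b_{2}$ is a quadratic residue modulo $p$. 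Since $n$ is square-free, $v_{p}(n^{2})=2$; dividing \eqref{eq23} by $p$ and reducing mod $p$, together with a brief valuation count that rules out $v_{p}(z_{1})\geq 1$ or $v_{p}(z_{3})\geq 1$, yields $z_{1}^{2}\equiv b_{2} z_{3}^{2}\pmod p$ with $z_{1},z_{3}$ both units. Hence $b_{2}$ is also a quadratic residue modulo $p$, and the product forces $-1$ to be a quadratic residue mod $p$, so $p\equiv 1\pmod 4$, i.e.\ $p\equiv 1,5\pmod 8$.

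For $[b]$, if a prime $p\mid n$ satisfies $p\mid b_{2}$, then \eqref{eq22} mod $p$ gives $b_{1}z_{1}^{2}\equiv 1$, while \eqref{eq23} mod $p$ gives $b_{1}z_{1}^{2}-b_{1} b_{2} z_{3}^{2}\equiv 0$; subtracting yields $1\equiv 0\pmod p$, a contradiction. Combined with Assumption~2 this forces $b_{2}\in\{1,q\}$. For the second assertion, assume $(1,q)\in S^{(2)}(E_{n}/\mathbb{Q})$ and fix a prime $p\mid n$ with $p\equiv 5\pmod 8$. With $(b_{1},b_{2})=(1,q)$, \eqref{eq23} becomes $z_{1}^{2}+n^{2}=q z_{3}^{2}$; reducing mod $p$ forces $q$ to be a quadratic residue mod $p$ (the degenerate case $p\mid z_{3}$ is handled via \eqref{eq22}, using that $p\equiv 1\pmod 4$). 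But $2q\equiv 1\pmod p$ gives $q\equiv 2^{-1}\pmod p$, so $q$ is a quadratic residue iff $2$ is, which fails precisely when $p\equiv \pm 3\pmod 8$.

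For $[c]$, I work $2$-adically. Setting $k_{0}=\max(0,-\min_{i}v_{2}(z_{i}))$ and $Z_{i}=2^{k_{0}} z_{i}\in\mathbb{Z}_{2}$ (so at least one $Z_{i}$ is a $2$-adic unit), the scaled equations become
\[
b_{1}Z_{1}^{2} - b_{2}Z_{2}^{2} = 2^{2k_{0}}, \qquad b_{1} b_{2} Z_{3}^{2} - b_{2} Z_{2}^{2} = 2^{2k_{0}+1} q.
\]
Using $[b]$, $b_{2}\in\{1,q\}$. A mod $2$ and mod $4$ parity analysis rules out $k_{0}=0$ in either case and forces all three $Z_{i}$ to be odd, so $Z_{i}^{2}\equiv 1\pmod 8$. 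A direct mod $8$ reduction of the two scaled equations then pins $b_{1}\equiv 1\pmod 8$, regardless of whether $b_{2}=1$ or $b_{2}=q$ and regardless of $q\bmod 8$. Since $[a]$ guarantees that every prime factor of $b_{1}$ lies in $\{p:p\equiv 1,5\pmod 8\}$, writing $b_{1}\equiv 5^{t}\pmod 8$ where $t$ counts the prime factors of $b_{1}$ congruent to $5$ mod $8$, the congruence $5^{t}\equiv 1\pmod 8$ forces $t$ to be even. The main obstacle is the $2$-adic step in $[c]$: pinning $b_{1}$ modulo $8$ (rather than only modulo $4$) requires the careful scaling by $2^{k_{0}}$, a case split on $b_{2}\in\{1,q\}$, and simultaneous use of both scaled equations to eliminate the sub-cases in which $b_{1}\equiv 5\pmod 8$.
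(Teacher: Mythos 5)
Your overall strategy is sound and in two places genuinely different from the paper's. For $[a]$ at odd $p\mid n$ you show directly that both $-b_{2}$ and $b_{2}$ are squares mod $p$ (from \eqref{eq22} and from \eqref{eq23} divided by $p$), forcing $p\equiv 1\pmod 4$; this is cleaner than the paper, which first pins $b_{2}\in\{1,q\}$ and then argues separately for $(b_{1},1)$ and $(b_{1},q)$. For $[c]$ you work $2$-adically and pin $b_{1}\equiv 1\pmod 8$, whereas the paper works $q$-adically: it shows $\bigl(\frac{b_{1}}{q}\bigr)=1$ is necessary for a $q$-adic point and evaluates $\bigl(\frac{p}{q}\bigr)$ by reciprocity ($+1$ for $p\equiv 1$, $-1$ for $p\equiv 5\pmod 8$). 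Both routes legitimately give the parity statement. Note, however, that the paper's proof of $[a]$ establishes the stronger facts $q\nmid b_{1}$ and $2\nmid b_{1}$ (your reading of the case $p=q$ as harmless because $q\equiv 1\pmod 4$ satisfies the letter of $[a]$, but the exclusion of $q$ is what the paper actually uses later when it lists generators), so if your lemma is to feed the theorems you should keep the exclusion argument rather than the dodge.

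There is one concrete defect in $[c]$ as written: the claims that the parity analysis ``rules out $k_{0}=0$'' and ``forces all three $Z_{i}$ to be odd'' are false. Genuine Selmer elements such as $(17,1)$ admit $2$-adic solutions with all $z_{i}\in\mathbb{Z}_{2}$, and in any integral solution of \eqref{eq22} with $b_{1}\equiv b_{2}\equiv 1\pmod 8$ one in fact has $z_{1}$ odd and $z_{2}$ even, so a blanket $Z_{i}^{2}\equiv 1\pmod 8$ reduction is not available. The correct version of your argument is the contrapositive: assume $b_{1}\equiv 5\pmod 8$ (with $b_{1}$ odd, $b_{2}\in\{1,q\}$, so $b_{2}\equiv 1\pmod 4$). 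If all $z_{i}\in\mathbb{Z}_{2}$, then \eqref{eq22} forces $z_{1}$ odd, and \eqref{eq23} gives $b_{1}b_{2}z_{3}^{2}=b_{1}z_{1}^{2}+n^{2}\equiv 6\pmod 8$, whose $2$-adic valuation is $1$ while the left side has even valuation — contradiction. If $k_{0}\geq 1$, then in the scaled equations either $Z_{1}$ is a unit, in which case reduction mod $8$ forces some $Z_{i}^{2}\equiv 5\pmod 8$ (splitting $k_{0}=1$ from $k_{0}\geq 2$), or $Z_{1}$ is even, in which case both equations force $Z_{2},Z_{3}$ even, contradicting the minimality of $k_{0}$. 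With that repair, $b_{1}\equiv 1\pmod 8$ follows and your $5^{t}\equiv 1\pmod 8$ conclusion is fine. Two smaller points: ruling out $p\mid\gcd(b_{1},b_{2})$ needs the valuation parity of both \eqref{eq22} and \eqref{eq23} (in \eqref{eq22} alone the two odd-valuation terms could share a common negative valuation and cancel), and in $[b]$ your reduction of \eqref{eq22} and \eqref{eq23} modulo $p$ presupposes $z_{i}\in\mathbb{Z}_{p}$, which should be justified by the same parity-of-valuations observation you used in $[a]$.
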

\begin{proof}
    We start with noticing that $b_{1} \not \equiv 0 \pmod q$. Otherwise, from (\ref{eq23}) and (\ref{eq22}), $v_{q}(z_{i}) \geq 0 \implies -n^{2} \equiv 0 \pmod q$ and, $v_{q}(z_{i}) = -k \implies b_{1} \equiv 0 \pmod {q^{2}}$, contradiction in each case. This implies no $q$-adic solution for the homogeneous space associated with $(b_{1},b_{2})$ when $q$ divides $b_{1}$.\\
    A very similar argument as above shows that $b_{1}$ is odd if $(b_{1},b_{2}) \in S^{(2)}(E_{n}/ \mathbb{Q})$. As otherwise, $v_{2}(z_{i}) \geq 0$ implies $2$ divides $-n^{2}$ from (\ref{eq23}) and $v_{2}(z_{i}) = -k$ leads to $b_{2}$ as even from (\ref{eq22}), both contradictions from our assumptions of $n$ and $b_{2}$.\\
    We now note that $b_{2} \not \equiv 0 \pmod p$ if $p$ divides $n$. If $v_{p}(z_{i}) \geq 0$ then subtracting (\ref{eq23}) from (\ref{eq22}), one can get $2q \equiv 0 \pmod p$, a contradiction. Otherwise, from (\ref{eq23}), we get that $p^{2}$ divides $b_{1}$, a contradiction. This, along with the assumptions above, narrows down the choices of $b_{2} \in \{1,q\}$.\\
    Now $b_{1} \equiv 0 \pmod p$ implies $p$ divides $n$. We note that possible elements of $S^{(2)}(E_{n}/ \mathbb{Q})$ now looks like $(b_{1},1)$ or $(b_{1},q)$.\\
    If $p \equiv 3,7 \pmod 8$, we notice that the homogeneous space corresponding to $(b_{1},1)$ has no $p$-adic solution. This is because if $p \equiv 3,7 \pmod 8$ then depending on $v_{p}(z_{i})$, from (\ref{eq22}), either $\big(\frac{-1}{p}\big) = 1$, or $b_{2} \equiv 0 \pmod p$, contradiction either way.\\
    Now for homogeneous spaces corresponding to $(b_{1},q)$, we first note that $v_{p}(z_{i}) = - k \implies q \equiv 0 \pmod p$, a contradiction. For $v_{p}(z_{i}) \geq 0$, subtracting (\ref{eq23}) from (\ref{eq22}), we get that $\big(\frac{-2}{p}\big) = 1$ which implies $p \not \equiv 7 \pmod 8$. For $p \equiv 3 \pmod 8$, we note that $\big(\frac{q}{p}\big) = -1$ which contradicts (\ref{eq22}). This concludes the proof of part $[a]$. \\
    To conclude the proof of part $[b]$, we show that whenever $n$ has a prime factor $p \equiv 5 \pmod 8$, the homogeneous space for $(1,q)$ has no $p$-adic solution. From (\ref{eq23}), we get $\big(\frac{q}{p}\big) = 1$, a contradiction as $\big(\frac{q}{p}\big) = -1$ here. \\
    For part $[c]$, we notice that the only possible element in the $2$-Selmer group is of the form $(b_{1},1)$ with $p \equiv 1,5 \pmod 8$ its only prime divisors where $p$ varies over all divisors of $n$. For $p \equiv 5 \pmod 8$, we note that $\big(\frac{p}{q}\big) = -1$ whereas $\big(\frac{p}{q}\big) = 1$ whenever $p \equiv 1 \pmod 8$. Noting that only factors of $b_{1}$ are now primes $p \equiv 1,5 \pmod 8$, one can get that $\big(\frac{b_{1}}{q}\big) = 1$ if there are even number of factors of $5 \pmod 8$, else $\big(\frac{b_{1}}{q}\big) = -1$. The result now follows from equations (\ref{eq22}) and (\ref{eq23}), observing the fact that $\big(\frac{b_{1}}{q}\big) = 1$ is a necessary condition for the homogeneous space corresponding to $(b_{1},1)$ to have a $q$-adic solution.
\end{proof}
\begin{lem}\label{lem3}
    Let $n$ is even and $(b_{1},b_{2}) \in S^{(2)}(E_{n}/ \mathbb{Q})$. Then $b_{2} = 1$ always and for an arbitrary prime $p$, $b_{1} \equiv 0 \pmod p \implies p \equiv 1,5 \pmod 8$ and $p \neq q$.
\end{lem}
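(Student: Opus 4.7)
The plan is to do $p$-adic analysis at each prime $p$ in parallel to Lemma~\ref{lem2}, exploiting the simpler factorization $n^{2}+1 = q$ (odd prime) when $n$ is even. A handy preliminary is the identity
\[
b_{2} (z_{2}^{2} - b_{1} z_{3}^{2}) = -q,
\]
obtained by eliminating $b_{1} z_{1}^{2}$ between (\ref{eq22}) and (\ref{eq23}); it holds for any local solution in $\mathbb{Q}_{l}$.

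To show $b_{2} = 1$, I will rule out $p \mid b_{2}$ for every prime $p$ in the allowed set $\{2, q\} \cup \{p : p \mid n\}$; the case $p = q$ is Assumption~2, so fix $p \mid 2n$ and suppose $v_{p}(b_{2}) = 1$. The identity forces $v_{p}(z_{2}^{2} - b_{1} z_{3}^{2}) = -1$, and since $v_{p}(z_{2}^{2})$ is even, parity pins down $v_{p}(b_{1}) = 1$ and $v_{p}(z_{3}) = -1$. Lemma~\ref{lem1} then gives $v_{p}(z_{1}) \leq -1$, so in (\ref{eq23}) the summands $b_{1} z_{1}^{2}$ and $b_{1} b_{2} z_{3}^{2}$ have distinct valuations (odd, $\leq -1$, and $0$ respectively), forcing $v_{p}(\text{LHS}) \leq -1 < 2 \leq v_{p}(-n^{2})$, a contradiction. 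Combined with $b_{2} > 0$ square-free, I conclude $b_{2} = 1$.

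With $b_{2} = 1$, equations (\ref{eq22}) and (\ref{eq23}) simplify to $b_{1} z_{1}^{2} - z_{2}^{2} = 1$ and $b_{1}(z_{1}^{2} - z_{3}^{2}) = -n^{2}$. If $q \mid b_{1}$, a short parity argument forces $v_{q}(z_{i}) \geq 0$ for all $i$, after which reducing the second equation modulo $q$ yields $0 \equiv -n^{2} \equiv 1 \pmod{q}$, a contradiction. If $2 \mid b_{1}$, the same equation gives $v_{2}(z_{1}^{2} - z_{3}^{2}) = 1$; since $v_{2}(a^{2} - b^{2})$ is even when $v_{2}(a) \neq v_{2}(b)$, and $v_{2}(u_{1}^{2} - u_{3}^{2}) \geq 3$ for any $2$-adic units (odd squares being $\equiv 1 \pmod{8}$), this forces $v_{2}(z_{1}) = v_{2}(z_{3}) \leq -1$. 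Feeding $v_{2}(z_{1}) \leq -1$ into the first equation, $v_{2}(b_{1} z_{1}^{2})$ is odd and negative while $v_{2}(z_{2}^{2})$ is even, so $v_{2}(\text{LHS}) < 0$ cannot equal $v_{2}(1) = 0$, a contradiction.

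Finally, any odd prime $p \mid b_{1}$ must divide $n$ (only $\{2, q\} \cup \{p : p \mid n\}$ are allowed as prime factors, and the first two have just been excluded). I plan to rule out $v_{p}(z_{i}) < 0$ by the usual parity/valuation arguments on (\ref{eq22}) and (\ref{eq23}), then reduce (\ref{eq22}) modulo $p$ to obtain $z_{2}^{2} \equiv -1 \pmod{p}$, which forces $p \equiv 1 \pmod{4}$, i.e., $p \equiv 1$ or $5 \pmod{8}$. The main obstacle is the $2$-adic step for $b_{1}$ odd: unlike for integers, $v_{2}(a^{2} - b^{2}) = 1$ is achievable for rationals $a, b$ with $v_{2}(a) = v_{2}(b) \leq -1$, so (\ref{eq23}) alone is insufficient and must be combined with (\ref{eq22}) to extract a valuation mismatch.
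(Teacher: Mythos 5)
Your overall strategy is sound and in outline parallels the paper's: prime-by-prime local valuation analysis, first forcing $b_{2}=1$ and then constraining the prime divisors of $b_{1}$. Two steps are genuinely different in execution. For $b_{2}=1$ you work from the single identity $b_{2}(z_{2}^{2}-b_{1}z_{3}^{2})=-q$, whereas the paper treats odd $p\mid n$ and $p=2$ separately by cases on $v_{p}(z_{i})$. More substantially, you exclude $2\mid b_{1}$ by a purely $2$-adic valuation count (using $v_{2}(n)=1$ and that odd squares are $1 \bmod 8$), while the paper rules out $(2,1)$ at the prime $q$ via $\big(\frac{2}{q}\big)=-1$, which requires first observing $q=n^{2}+1\equiv 5\pmod 8$. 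Your route is self-contained at $2$ and never uses the congruence class of $q$; both work. The remaining steps ($q\nmid b_{1}$, and $\big(\frac{-1}{p}\big)=1$ forcing $p\equiv 1\pmod 4$ for odd primes $p\mid b_{1}$) match the paper's.

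There is one genuine, though easily repaired, gap: in the $b_{2}=1$ step you assert that, since $v_{p}(z_{2}^{2}-b_{1}z_{3}^{2})=-1$ is odd and $v_{p}(z_{2}^{2})$ is even, ``parity pins down $v_{p}(b_{1})=1$ and $v_{p}(z_{3})=-1$.'' That conclusion only follows when $v_{p}(z_{2}^{2})\neq v_{p}(b_{1}z_{3}^{2})$; if the two terms have the same (necessarily even) valuation $2a\leq -2$, ultrametric cancellation can still produce valuation $-1$ with $v_{p}(b_{1})=0$, so parity alone does not settle it. You need to dispose of this case separately, which is quick: then $v_{p}(z_{2})=a\leq -1$, so in (\ref{eq22}) the term $b_{2}z_{2}^{2}$ has odd negative valuation $1+2a$ while $b_{1}z_{1}^{2}$ has even valuation, hence the left-hand side of (\ref{eq22}) has valuation $\leq -1\neq 0$, a contradiction. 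With that one line added, your argument for $b_{2}=1$, and with it the whole proof, goes through.
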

\begin{proof}
    We first note that $b_{1} \not \equiv 0 \pmod q$ here, and the proof is the same as in the $n$ odd case. Similarly, we can also see that if $p$ is an odd prime that divides $n$, then $b_{2} \not \equiv 0 \pmod p$.\\
    We now notice that $b_{2}$ is odd if $(b_{1},b_{2}) \in S^{(2)}(E_{n}/ \mathbb{Q})$. One can trivially observe that $\gcd(b_{1}, b_{2}) \not \equiv 0 \pmod 2$. Now $b_{2}$ is even either implies $2$ divides $q$ when $v_{2}(z_{i}) \geq 0$ or $2$ divides $b_{1}$ when $v_{2}(z_{i}) = -k$, contradiction either way.\\
    We have now established that $b_{1} \not \equiv 0 \pmod q$ and $b_{2} = 1$ if $(b_{1},b_{2}) \in S^{(2)}(E_{n}/ \mathbb{Q})$.\\
    From equation (\ref{eq22}), it is evident that $b_{1} \equiv 0 \pmod p \implies p \equiv 1,5 \pmod 8$. This is because (\ref{eq22}) implies $\big(\frac{-1}{p}\big) = 1$ for the pair $(b_{1},1)$. $n^{2}+1 = q$ implies $q \equiv 5 \pmod 8$. Subtracting (\ref{eq23}) from (\ref{eq22}) for homogeneous space corresponding to $(2,1)$, we get $\big(\frac{2}{q}\big) = 1$, a contradiction for $q \equiv 5 \pmod 8$. Hence the homogeneous space corresponding to $(2,1)$ has no $q$-adic solution. This concludes the proof. 
\end{proof}
\section{Everywhere Local Solution}
\noindent We prove that the homogeneous spaces corresponding to $(p,1)$ and $(1,q)$ have local solutions everywhere where $p$ is any prime factor of $n$ such that $p \equiv 1 \pmod 8$ ($p \equiv 1,5 \pmod 8$ if $n$ is even). We use Hensel's lemma to lift a simple root of a polynomial $f(x)$ modulo a prime $l$ to a solution for $f(x)$ in $\mathbb{Z}_{l}$. Let $C$ be the homogeneous space given by (\ref{eq22}) and (\ref{eq23}) corresponding to the pairs $(p,1)$ and $(1,q)$. A trivial application of smoothness of $C$, the degree-genus formula, and the Hasse-Weil bound give the following.\\
\noindent $[a]$ For $l\geq 5, \text{ } l \neq t$ where $t \equiv 1,5 \pmod 8$ and $t$ divides $n$, $C$ is a homogeneous space of genus $1$ corresponding to $(p,1)$ or $(p_{1}p_{2}, 1)$ with $\# C(\mathbb{F}_{l})\;\geq \;1+l-2\sqrt{l}\;\geq \;2$ where $p\equiv 1 \pmod 8$ for odd $n$ ($p \equiv 1,5 \pmod 8$ for even $n$), $p_{1} \equiv p_{2} \equiv 5 \pmod 8$ for odd $n$. \\
\noindent $[b]$ For $l\geq 5, \text{ } l \neq t$ where $t \equiv 1,3 \pmod 8$ and $t$ divides $n$, $C$ is a homogeneous space of genus $1$ corresponding to $(1,q)$ then $\# C(\mathbb{F}_{l})\;\geq \;1+l-2\sqrt{l}\;\geq \;2$.\\
A simple application of Hensel's lemma now immediately (cf. \cite{Chakraborty} for a similar and detailed proof) implies that the homogeneous spaces mentioned above have $l$-adic solution for all the primes $l$ mentioned above. This reduces the problem to finding local solutions for only finitely many primes.
\begin{lem}\label{lem4}
    Let $n$ be an odd integer such that $n^{2}+1 = 2q$ for a prime $q$. Then for each prime factor, $p$ of $n$, $p \equiv 1 \pmod 8$, the homogeneous spaces corresponding to $(p,1)$ have local solutions everywhere for $l \leq \infty$. 
\end{lem}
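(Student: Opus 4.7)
The plan is to verify local solvability of the system \eqref{eq22}--\eqref{eq23} with $(b_{1},b_{2})=(p,1)$ at every place of $\mathbb{Q}$. The archimedean case is trivial since $p,1>0$; for $l \geq 5$ outside the bad set described in $[a]$ the Hasse--Weil bound combined with Hensel's lemma furnishes a $\mathbb{Q}_{l}$-point, as the preceding discussion already indicates. What remains is the finite list $l=2$, $l=3$, and $l=t$ for each prime $t \mid n$ with $t \equiv 1,5 \pmod 8$ (in particular $l=p$).

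At $l=p$, the hypothesis $p \equiv 1 \pmod 8$ forces $p \equiv 1 \pmod 4$, so $-1$ is a square in $\mathbb{F}_{p}$. For any $z_{1} \in \mathbb{Z}_{p}^{*}$ the value $pz_{1}^{2}-1 \equiv -1 \pmod{p}$ is a nonzero square that Hensel lifts to $z_{2} \in \mathbb{Z}_{p}^{*}$; writing $n=pm$ with $\gcd(m,p)=1$, the identity $z_{3}^{2}=z_{1}^{2}+pm^{2} \equiv z_{1}^{2} \pmod{p}$ similarly lifts to $z_{3} \in \mathbb{Z}_{p}^{*}$. For the other bad primes $l=t \mid n$ with $t \equiv 1,5 \pmod 8$, the second equation contributes $v_{t}(n^{2}/p) \geq 2$, so $z_{3}^{2} \equiv z_{1}^{2} \pmod{t^{2}}$ again lifts automatically; the only residual constraint is that $pz_{1}^{2}-1$ be a nonzero square in $\mathbb{F}_{t}$, which is handled by a short case analysis on $p \pmod{t}$ (using the ansatz $z_{1}=u/t$ with $u \in \mathbb{Z}_{t}^{*}$ when no unit $z_{1}$ works, as happens for $t=5$, $p \equiv \pm 1 \pmod 5$). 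At $l=3$, a smooth $\mathbb{F}_{3}$-point exists either affinely (when $p \equiv 2 \pmod 3$, or when $3 \mid n$) or else, after projectivizing the affine equations by a new variable $W$, at the hyperplane $W=0$ with $Z_{1}^{2} \equiv Z_{2}^{2} \equiv Z_{3}^{2} \pmod 3$; Hensel lifts either kind.

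The decisive case is $l=2$, where the hypothesis $p \equiv 1 \pmod 8$ --- equivalently $p \in (\mathbb{Z}_{2}^{*})^{2}$ --- is used essentially. I would make the explicit choice $z_{2}=1/4$. Then \eqref{eq22} gives $z_{1}^{2}=17/(16p)$, while combining \eqref{eq22}, \eqref{eq23}, and the identity $1+n^{2}=2q$ gives $z_{3}^{2}=(32q+1)/(16p)$. The odd parts $17 \equiv 1 \pmod 8$ and $32q+1 \equiv 1 \pmod{32}$ (using $n$ odd, so $16n^{2} \equiv 16 \pmod{32}$) are both squares in $\mathbb{Z}_{2}^{*}$, and $16p$ is a square in $\mathbb{Q}_{2}^{*}$ (this is where $p \in (\mathbb{Z}_{2}^{*})^{2}$ enters); consequently $z_{1}^{2},z_{3}^{2} \in (\mathbb{Q}_{2}^{*})^{2}$ and the required $z_{1},z_{3} \in \mathbb{Q}_{2}^{*}$ exist.

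The main obstacle I expect is precisely this 2-adic verification: one must find the right ansatz (here $z_{2}=1/4$) and push the 2-adic congruences to modulus $32$ rather than $8$ in order to recognize the relevant squares, after which the hypothesis $p \equiv 1 \pmod 8$ clicks into place. The remaining small-prime cases are comparatively routine Hensel once the appropriate case split on $p \pmod t$ has been made.
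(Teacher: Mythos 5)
Your overall strategy is the same as the paper's: invoke the smoothness/Hasse--Weil discussion preceding the lemma to reduce to $l=2,3$ and the primes $t\mid n$ with $t\equiv 1,5\pmod 8$, then exhibit Hensel-liftable points at those places. Your $l=2$ step is correct and is in effect the paper's choice of $2$-power denominators made fully explicit (and, as written, cleaner than the paper's version, which fixes both $z_{2}$ and $z_{3}$ and then asks one variable to satisfy two equations): with $z_{2}=1/4$, equation (\ref{eq22}) forces $z_{1}^{2}=17/(16p)$ and (\ref{eq23}) together with $n^{2}+1=2q$ forces $z_{3}^{2}=(32q+1)/(16p)$, and $17$, $32q+1$, $16p$ are all squares in $\mathbb{Q}_{2}$ precisely because $p\equiv 1\pmod 8$. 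The $l=p$ case (using $p\equiv 1\pmod 4$ and $v_{p}(n^{2}/p)=1$) and the $l=3$ case (affine smooth point or the point with $W=0$, which exists since $p\equiv 1\pmod 3$ there) are also fine.

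The genuine divergence from the paper, and the one step you must tighten, is $l=t$ for the remaining primes $t\mid n$. The paper's uniform trick is to take $z_{1}\equiv z_{3}\equiv 0$ and $z_{2}^{2}\equiv -1\pmod t$: since $t\equiv 1\pmod 4$ and $2q=n^{2}+1\equiv 1\pmod t$, both (\ref{eq22}) and the difference of (\ref{eq22}) and (\ref{eq23}) reduce to the same condition, so no case analysis on $p\bmod t$ is needed (though even there the lifting needs more care than ``simple root'' suggests, because the two defining equations have proportional gradients at that point). Your route instead needs, for every such $t$, either a unit $z_{1}$ with $pz_{1}^{2}-1$ a nonzero square modulo $t$, or else $\big(\frac{p}{t}\big)=1$ so that the ansatz $z_{1}=u/t$ applies; ``a short case analysis on $p\pmod t$'' is not a proof for an arbitrary prime $t$. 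The missing uniform statement is easy and you should state it: if $\big(\frac{p}{t}\big)=-1$ the conic $z_{2}^{2}=pz_{1}^{2}-1$ over $\mathbb{F}_{t}$ has $t+1$ affine points, exactly two on $z_{1}=0$ and none on $z_{2}=0$, so a unit $z_{1}$ always exists; if $\big(\frac{p}{t}\big)=1$ your negative-valuation ansatz works unconditionally. With that inserted your $t$-adic case closes, and you did correctly isolate the only configuration in which no unit $z_{1}$ exists ($t=5$, $p\equiv\pm 1\pmod 5$). Finally, like the paper you fold $l=q$ into the Hasse--Weil case; strictly the reduction modulo $q$ is singular because $q\mid n^{2}+1$, so that place deserves the same explicit treatment (it succeeds because $\big(\frac{p}{q}\big)=1$ for $p\equiv 1\pmod 8$ dividing $n$, as in Lemma \ref{lem2}) --- a shared omission rather than a defect peculiar to your write-up.
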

\begin{proof}
    As mentioned above, we need only to show local solutions exist for $l=2,3$ and $t$ where $t \equiv 1,5 \pmod 8$ is a prime that divides $n$. Fixing two of the three variables $z_{1},z_{2},z_{3}$, we present a set of simple roots for the system of equations (\ref{eq22}) and (\ref{eq23}) modulo $l$ using Lemma \ref{lem1} that can be lifted to $\mathbb{Q}_{l}$ using Hensel's lemma.\\
    For $l=2$, $z_{1} = 1$ is a simple root modulo $8$ to the system of equations $pz_{1}^{2} - 1 = 2^{2k}$ and $z_{1}^{2} - 1 = - \frac{n^{2}}{p} \cdot 2^{2k}$ for $k \geq 2$.\\
    For $l=3$, $z_{1} = 1$ is a simple root modulo $3$ to the system of equations $pz_{1}^{2} - 1 = 3^{2k}$ and $z_{1}^{2} - 1 = - \frac{n^{2}}{p} \cdot 3^{2k}$ when $p \equiv 1 \pmod 3$. When $p \equiv 2 \pmod 3$, one can see that $z_{1} = 1$ is a simple root modulo $3$ to the simultaneous equations $pz_{1}^{2} - 1 = 1$ and $z_{1}^{2} - 0 = - \frac{n^{2}}{p}$.\\
    For $l = t$, $t \equiv 1, 5 \pmod 8$ and $t$ divides $n$, $z_{2} = a$ such that $a^{2} \equiv -1 \pmod p$ is a simple root modulo $p$ for equations $p \cdot 0^{2} -z_{2}^{2} = 1$ and $p \cdot 0^{2} - z_{2}^{2} = 2q$. This concludes the proof. 
\end{proof}
\noindent In a very similar way to that of Lemma \ref{lem4}, we can prove the following result for $(p_{1}p_{2},1)$ where $p_{i} \equiv 5 \pmod 8$. We only observe that $p_{1}p_{2} \equiv 1 \pmod 8$ in this case. 
\begin{lem}\label{lem7}
    Let $n$ be an odd integer such that $n^{2}+1 = 2q$ for a prime $q$. Then if exist, for any two prime factors $p_{1}$ and $p_{2}$ of $n$, $p_{i} \equiv 5 \pmod 8, i= 1,2$, the homogeneous spaces corresponding to $(p_{1}p_{2},1)$ have local solutions everywhere for $l \leq \infty$.
\end{lem}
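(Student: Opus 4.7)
The plan is to transport the proof of Lemma~\ref{lem4} essentially verbatim, with the single coefficient $p$ replaced by the product $p_1 p_2$. The crucial observation that makes this transfer work is
\[
p_1 p_2 \equiv 5 \cdot 5 \equiv 1 \pmod 8,
\]
so that at every rational prime the pair $(p_1 p_2, 1)$ satisfies exactly the same congruence conditions that made $(p, 1)$ work in the case $p \equiv 1 \pmod 8$. Positivity of both entries handles $l = \infty$, and the Hasse--Weil / degree-genus estimate of item $[a]$ at the start of this section handles every prime $l \geq 5$ with $l \nmid n$. The remaining work is to produce explicit local points for $l = 2$, $l = 3$, and for each prime $t \mid n$ with $t \equiv 1, 5 \pmod 8$.

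For $l = 2$, I would fix $z_2 = 2^k$ and scale $z_3$ appropriately (using that $p_1 p_2$ is a square in $\mathbb{Z}_2^*$, since $p_1 p_2 \equiv 1 \pmod 8$) so that (\ref{eq22}) and (\ref{eq23}) become two univariate polynomial equations in $z_1$ of the form $p_1 p_2 \, z_1^2 - 1 = 2^{2k}$ and $z_1^2 - 1 = -n^2/(p_1 p_2) \cdot 2^{2k}$, each of which has $z_1 = 1$ as a simple root modulo $8$ for $k \geq 2$; Hensel over $\mathbb{Z}_2$ then lifts. For $l = 3$, I would split according to whether $p_1 p_2 \equiv 1$ or $\equiv 2 \pmod 3$ and apply the two sub-constructions of Lemma~\ref{lem4} with $p$ replaced by $p_1 p_2$. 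For $l = t$ with $t \mid n$ and $t \equiv 1, 5 \pmod 8$, I would take $z_1 = 0$ and solve $-z_2^2 \equiv 1 \pmod t$, which is possible since $t \equiv 1 \pmod 4$ makes $-1$ a quadratic residue modulo $t$; Hensel then lifts to $\mathbb{Z}_t$.

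The only step requiring slightly more thought is the case $t \in \{p_1, p_2\}$, in which $t$ divides the leading coefficient $p_1 p_2$. With $z_1 = 0$ the entire $p_1 p_2 \, z_1^2$ term vanishes, so (\ref{eq22}) reduces to $-z_2^2 \equiv 1 \pmod t$, and (\ref{eq23}) becomes $0 \equiv -n^2 \pmod t$, which is automatic from $t \mid n$. The Hensel step is unobstructed by $t \mid p_1 p_2$, because the Jacobian in the $z_2$-direction is $-2 z_2 \equiv -2a \not\equiv 0 \pmod t$. I expect this small bookkeeping and the $l = 2$ verification to be the only spots where going from one prime $p$ to the product $p_1 p_2$ needs explicit attention; the remainder of the argument is a direct transcription of Lemma~\ref{lem4}.
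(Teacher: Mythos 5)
Your proposal is correct and takes essentially the same route as the paper, which proves Lemma~\ref{lem7} simply by observing that $p_{1}p_{2}\equiv 1 \pmod 8$ and repeating the argument of Lemma~\ref{lem4} with $p$ replaced by $p_{1}p_{2}$. Your extra care with the local points at $l=2$ and at $t\in\{p_{1},p_{2}\}$ (checking the Hensel/Jacobian condition $-2a\not\equiv 0 \pmod t$) only makes explicit what the paper leaves implicit.
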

\begin{lem}\label{lem5}
    Let $n$ be an even integer such that $n^{2}+1 = q$ for a prime $q$. Then for each prime factor, $p$ of $n$, $p \equiv 1,5 \pmod 8$, the homogeneous spaces corresponding to $(p,1)$ have local solutions everywhere for $l \leq \infty$.
\end{lem}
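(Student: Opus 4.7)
The plan is to mirror the proof of Lemma \ref{lem4}. By the Hasse-Weil bound discussion preceding Lemma \ref{lem4}, local solvability is automatic at every prime $l \geq 5$ outside the set $\{t : t \mid n,\; t \equiv 1, 5 \pmod 8\}$, so it suffices to handle $l = \infty$, $l = 2$, $l = 3$, and each prime $l = t$ with $t \mid n$ and $t \equiv 1, 5 \pmod 8$. The real case is immediate since $p > 0$.

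For $l = t$, I intend to set $z_3 = z_1 + n$ and solve (\ref{eq23}) linearly, which forces $z_1 = n(1-p)/(2p)$. Using $v_t(n^2) = 2$ (from square-freeness of $n$), one checks that $v_t(pz_1^2) \geq 1$ in both subcases $p = t$ and $p \neq t$, so (\ref{eq22}) reduces modulo $t$ to $z_2^2 \equiv -1 \pmod t$. Since $t \equiv 1 \pmod 4$ forces $-1$ to be a quadratic residue modulo $t$, a simple root exists and lifts by Hensel's lemma to $z_2 \in \mathbb{Z}_t$, completing this case.

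For $l = 3$, note that $p \neq 3$ (because $p \equiv 1, 5 \pmod 8$) and $q \neq 3$ (because $n^2 \in \{0, 1\} \pmod 3$, so $n^2 + 1 \not\equiv 0 \pmod 3$). When $3 \mid n$, the same ``$z_3 = z_1 + n$'' trick as above yields a $3$-adic point. When $3 \nmid n$, I would pick $z_1 \in \mathbb{Z}_3^*$ with the residue dictated by $p \pmod 3$ and then perform a shift by a multiple of $3$ (chosen modulo $9$) so that both $pz_1^2 - 1$ and $z_1^2 + n^2/p$ acquire even $3$-adic valuation with unit part $\equiv 1 \pmod 3$; these are precisely the quantities that must equal $z_2^2$ and $z_3^2$, and Hensel's lemma supplies the lift.

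The main obstacle is the case $l = 2$, which is genuinely different from Lemma \ref{lem4} because $n$ is even and $p$ may be $\equiv 5 \pmod 8$. Rewriting (\ref{eq22}) as $pz_1^2 = 1 + z_2^2$ and the difference (\ref{eq23})$-$(\ref{eq22}) as $pz_3^2 = q + z_2^2$, a $\mathbb{Q}_2$-solution exists iff for some $z_2 \in \mathbb{Q}_2$ both $(1 + z_2^2)/p$ and $(q + z_2^2)/p$ lie in $(\mathbb{Q}_2^*)^2$. Using that $n = 2m$ with $m$ odd forces $n^2 \equiv 4 \pmod{32}$ and hence $q \equiv 5 \pmod 8$, a short square-class computation in $\mathbb{Q}_2^*/(\mathbb{Q}_2^*)^2$ shows that the choice $z_2 = 1/4$ works when $p \equiv 1 \pmod 8$ (both quantities become $\equiv 1 \pmod 8$ times a $2$-adic square) and $z_2 = 1/2$ works when $p \equiv 5 \pmod 8$ (both quantities become $5 \cdot 5^{-1} \equiv 1 \pmod 8$ times a $2$-adic square); in either case, the required $z_1, z_3 \in \mathbb{Q}_2$ follow as square roots.
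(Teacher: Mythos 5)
Your overall structure (reduce to $l=\infty,2,3$ and to primes $t\mid n$ with $t\equiv 1,5\pmod 8$ via the paper's Hasse--Weil remark, then exhibit explicit local points) is the same as the paper's, and your $l=t$ and $l=2$ arguments are correct; indeed the substitution $z_3=z_1+n$, $z_1=n(1-p)/(2p)$ at $l=t$ and the explicit choices $z_2=1/4$ (for $p\equiv 1\pmod 8$) and $z_2=1/2$ (for $p\equiv 5\pmod 8$) at $l=2$ are more precise than the paper's proof, which just says ``same as Lemma \ref{lem4}'' apart from a modified mod-$8$ lift when $p\equiv 5\pmod 8$. The genuine gap is at $l=3$. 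If $3\mid n$, your trick forces $v_3(z_1)\geq 1$, so (\ref{eq22}) would require $z_2^2=pz_1^2-1\equiv -1\equiv 2\pmod 3$, and a $3$-adic unit is a square only if it is $\equiv 1\pmod 3$: the feature that made the trick work at $t$ (that $-1$ is a residue because $t\equiv 1\pmod 4$) is exactly what fails at $3$. If $3\nmid n$ and $p\equiv 1\pmod 3$, your recipe cannot work either: then $n^2/p\equiv 1\pmod 3$, so for a unit $z_1\in\mathbb{Z}_3^*$ the quantity $z_3^2=z_1^2+n^2/p$ is a unit $\equiv 2\pmod 3$, while for $3\mid z_1$ the quantity $z_2^2=pz_1^2-1$ is a unit $\equiv 2\pmod 3$; shifting a unit $z_1$ by a multiple of $3$ changes neither residue, so no $z_1\in\mathbb{Z}_3$ works at all --- every $3$-adic point of this homogeneous space has $v_3(z_1)<0$.

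The repair is the case split on $p\bmod 3$ that Lemma \ref{lem4} makes, allowing negative valuation of $z_1$. If $p\equiv 1\pmod 3$ (whether or not $3\mid n$), take $z_1=1/3$: then $z_2^2=(p-9)/9$ and $z_3^2=(p+9n^2)/(9p)$ have even valuation and unit parts $\equiv 1\pmod 3$, hence are squares in $\mathbb{Q}_3$. If $p\equiv 2\pmod 3$ and $3\mid n$, take $z_1=1$: $p-1\equiv 1\pmod 3$ and $1+n^2/p\equiv 1\pmod 3$. If $p\equiv 2\pmod 3$ and $3\nmid n$, your shifting idea is sound, since $-n^2/p\equiv 1\pmod 3$ is a unit square $c$ in $\mathbb{Z}_3$ and one may take $z_1$ a unit $3$-adically close to (but distinct from) a suitable square root of $c$, making $z_1^2-c$ of even valuation with unit part $\equiv 1\pmod 3$, while $pz_1^2-1\equiv 1\pmod 3$ holds automatically. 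With the $l=3$ case rewritten along these lines, your proof is complete and otherwise follows essentially the same route as the paper.
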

\begin{proof}
    We note that the proof is exactly the same as the proof given in Lemma \ref{lem4} for $p \equiv 1,5 \pmod 8$, except the $l=2$ case for $p \equiv 5 \pmod 8$, which we describe below. \\
    For $p \equiv 5 \pmod 8$, we note that $z_{1} = 1$ is a simple root modulo $8$ to the system of equations $pz_{1}^{2} - 1 = 2^{2}$ and $z_{1}^{2} - 1 = - \frac{n^{2}}{p} \cdot 2^{2}$. This concludes the proof.\\
\end{proof}
\begin{lem}\label{lem6}
    For $n$ odd, $(1,q) \in S^{(2)}(E_{n}/ \mathbb{Q})$ if $n$ has no prime factor of the form $5$ modulo $8$.
\end{lem}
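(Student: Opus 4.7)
The plan is to verify that the homogeneous space
\[
C \colon z_1^2 - q z_2^2 = 1, \qquad z_1^2 - q z_3^2 = -n^2
\]
attached to $(1, q)$ has a $\mathbb{Q}_l$-point at every place $l \leq \infty$. Subtracting the two equations and using $2q = n^2 + 1$ yields the useful identity $z_3^2 - z_2^2 = 2$; via the rational parametrization $(z_2, z_3) = \bigl((t^2-2)/(2t),\,(t^2+2)/(2t)\bigr)$, the task reduces to exhibiting, at each place, a value of $t$ for which $1 + q z_2^2$ is a square in $\mathbb{Q}_l$. The archimedean place is immediate from the genuine real solution $(1, 0, \sqrt{2})$, and by part $[b]$ of the preceding discussion combined with Hensel's lemma, the primes $l \geq 5$ outside the stated exceptional set are already covered. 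What remains is $l \in \{2, 3\}$, $l = q$, and $l = p$ for each prime $p \mid n$.

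For $l = q$, take $t = 1$, so $4 z_1^2 = q + 4 \equiv 4 \pmod q$; Hensel's lemma then lifts $\sqrt{q+4}$ to $\mathbb{Q}_q$. For $l = 2$, the parity of $n$ forces $q \equiv 1 \pmod 4$, hence $q \equiv 1$ or $5 \pmod 8$: take $t = 1$ when $q \equiv 5 \pmod 8$ (yielding $q + 4 \equiv 1 \pmod 8$, a $2$-adic square) and $t = 4$ when $q \equiv 1 \pmod 8$ (yielding $16 + 49q \equiv q \equiv 1 \pmod 8$). For $l = 3$, reducing $n^2 + 1 = 2q$ modulo $3$ shows $q \equiv 1 \pmod 3$ whenever $3 \nmid n$, in which case $t = 3$ gives $36 + 49q \equiv 1 \pmod 3$, lifting by Hensel; the sub-case $3 \mid n$ is absorbed into the prime-divisor analysis that follows.

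The essential case is $l = p$ with $p \mid n$. From $2q \equiv 1 \pmod p$ we obtain $(q/p) = (2/p)$, so the hypothesis $p \not\equiv 5 \pmod 8$ leaves two sub-cases. When $p \equiv \pm 1 \pmod 8$, $(2/p) = 1$ gives $\sqrt{2} \in \mathbb{Q}_p$, so the solution $(1, 0, \sqrt{2})$ already lies in $\mathbb{Q}_p$. When $p \equiv 3 \pmod 8$, $(q/p) = -1$, and Lemma~\ref{lem1} forces $v_p(z_1), v_p(z_3) \geq 1$; writing $z_1 = p u_1$ and $z_3 = p u_3$ and invoking $v_p(n) = 1$ from squarefreeness of $n$, the system decouples into the congruence $z_2^2 \equiv -q^{-1} \pmod p$, which is solvable because $(-q/p) = (-1/p)(q/p) = 1$, together with the affine conic $u_1^2 - q u_3^2 = -(n/p)^2$ over $\mathbb{Z}_p$. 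The projective closure of the latter has no $\mathbb{F}_p$-points at infinity since $(q/p) = -1$, so every $\mathbb{F}_p$-point (which exists by an elementary counting argument for nondegenerate ternary quadratic forms over $\mathbb{F}_p$) is affine and lifts by Hensel's lemma to a $\mathbb{Q}_p$-point of $C$.

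The main technical obstacle is this final $p \equiv 3 \pmod 8$ case, where the natural rational point $(1, 0, \sqrt{2})$ fails to lie in $\mathbb{Q}_p$ and one must instead work at depth one in the $p$-adic filtration, stitching together mod-$p$ solutions of the two decoupled equations into a single lift. It is also the only place where both the hypothesis $p \not\equiv 5 \pmod 8$ (to secure $(-q/p) = 1$) and the squarefreeness of $n$ (to keep $n/p$ a $p$-adic unit) are essential.
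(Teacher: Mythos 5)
Your argument is correct, and it follows the same overall strategy as the paper: dispose of the good primes via the genus-one point count of Section 3, and at the finitely many remaining places exhibit explicit local points and lift them by Hensel's lemma. The execution, however, differs in ways worth noting. Your reduction via $z_3^2-z_2^2=2$ and its parametrization, which turns every local check into ``make $1+qz_2^2$ a square in $\mathbb{Q}_l$,'' streamlines the $l=2,3$ computations, and your direct treatment of $l=q$ and of prime divisors $p\equiv 7\pmod 8$ of $n$ (via $(1,0,\sqrt{2})$, using $\big(\frac{2}{p}\big)=1$) covers places the paper implicitly folds into its Hasse--Weil statement even though the homogeneous space has bad reduction there. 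The most substantive difference is at primes $p\equiv 3\pmod 8$ dividing $n$: the paper's proof of this lemma sets $z_1$ and (implicitly) $z_3$ to $0$ and claims a simple root in $z_2$, but modulo such a $p$ the reduced curve's only $\mathbb{F}_p$-points are of this shape and they are non-smooth (both gradients degenerate there, since $p^2\mid n^2$ and $\big(\frac{q}{p}\big)=-1$), so a literal one-step Hensel lift is not available; your depth-one substitution $z_1=pu_1$, $z_3=pu_3$, the point count on the unit conic $u_1^2-qu_3^2=-(n/p)^2$ (no points at infinity because $\big(\frac{q}{p}\big)=-1$), and the observation that $(p^2u_1^2-1)/q$ reduces to the quadratic residue $-q^{-1}$ is exactly the more careful argument needed to make that case rigorous. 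In short, your proof buys robustness at the places where the reduction is singular, at the modest cost of a slightly longer case analysis.
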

\begin{proof}
    We have already established in Lemma \ref{lem2} that if $n$ has a prime divisor of the form $5$ modulo $8$, then $(1,q) \not \in S^{(2)}(E_{n}/ \mathbb{Q})$.\\
    For $l=2$, we notice $n \not \equiv 1 \pmod 8$ implies $q \equiv 5 \pmod 8$. For homogeneous space associated to $(1,q)$, this implies $z_{1} = 1$ is a simple root to $z_{1}^{2} - q \cdot 1^{2} = 2^{2}$ and $z_{1}^{2} - q \cdot 1^{2} = -n^{2} \cdot 2^{2}$. Now $n \equiv 1 \pmod 8 \implies q \equiv 1 \pmod 8$. We can immediately now see that $z_{1} = 1$ is a simple root modulo $8$ of the simultaneous equations $z_{1}^{2} - q = 2^{2k}$ and $z_{1}^{2} - q = - n^{2} \cdot 2^{2k}$ for $k \geq 2$. \\
    For $l = 3$, we first note that $q \equiv 1 \pmod 3$ always. Then $z_{1} = 1$ is a simple root to the system of equations $z_{1}^{2} - q \cdot 1^{2} = 3^{2k}$ and $z_{1}^{2} - q \cdot 1^{2} = -n^{2} \cdot 3^{2k}$.\\
    For $l = t$, where $t \equiv 1,3 \pmod 8$ is prime, $t$ divides $n$, we note that $\big(\frac{-q}{t}\big) = \big(\frac{-2}{t}\big) = 1$. Now we can see that $z_{2} = a$ is a simple root to the equations $0^{2} - qz_{2}^{2} = 1$ and $0^{2} - z_{2}^{2} = 2 $ where $a^{2} \equiv -2 \pmod t$. This concludes the proof. 
\end{proof}
\section{Size of the $2$-Selmer group}
\noindent We are now in a position to bind the size of the $2$-Selmer group $S^{(2)}(E_{n}/ \mathbb{Q})$. The results are now almost evident from the workings established in earlier sections.\\ 
\begin{proof}[Proof of Theorem \ref{thm1}]
From Lemma \ref{lem6}, we know that $(1,q) \in S^{(2)}(E_{n}/ \mathbb{Q})$ if $n$ has no prime factor of the form $5$ modulo $8$. From the group structure of the $2$-Selmer group, Lemma \ref{lem2} and Lemma \ref{lem4}, we can see that $S^{(2)}(E_{n}/ \mathbb{Q}) = \langle{(p_{i},1), (1,q)\rangle}$ where $p_{i} \equiv 1 \pmod 8$ vary over all the prime factors of $n$. this proves part $[a]$ of Theorem \ref{thm1}.\\
From Lemma \ref{lem2}, we know $(1,q) \not \in S^{(2)}(E_{n}/ \mathbb{Q})$ if $n$ has a prime factor of the form $5$ modulo $8$. Hence Lemma \ref{lem4} and Lemma \ref{lem7} imply $S^{(2)}(E_{n}/ \mathbb{Q}) = \langle{(p_{i},1), (t_{i}t_{j},1)\rangle}$ where $p_{i}$ varies over all the prime factors of $n$ of the form $1$ modulo $8$ and $t_{i} \neq t_{j}$ varies over all the prime factors of $n$ of the form $5$ modulo $8$. Because there are $\frac{\Omega_{5,n}(\Omega_{5,n}-1)}{2}$ ways to choose distinct $t_{i}, t_{j}$, the result follows. 
\end{proof}

\begin{proof}[Proof of Theorem \ref{thm2}] We observe that  from Lemma \ref{lem3}, $(b_{1},b_{2}) \in S^{(2)}(E_{n}/ \mathbb{Q})$ implies $(b_{1},b_{2}) = (p,1)$ such that $p \equiv 1,5 \pmod 8$ divides $n$ are the only possibilities. In Lemma \ref{lem5}, we established the homogeneous spaces corresponding to those pairs have local solutions everywhere; hence the result follows.     
\end{proof}
\noindent We list both odd and even integers $n$ below with the 2-Selmer ranks of corresponding Heronian elliptic curve $E_{n}$. The table is verified using MAGMA{\cite{Magma}} and SAGE {\cite{Sage}}.   
\begin{table}[htbp]
\caption{ Examples}
\centering

\begin{tabular}{ |c|c|c|c|}
\hline
  $n$ & $q$ &  2- Selmer Rank of $E_{n}$ & Generators \\
  \hline
$3\cdot 5$  & 113 & 0 & - \\
 \hline
 $3\cdot 5 \cdot 7 \cdot 11$ & 667013 & 0 & - \\
 \hline
$5\cdot 11 \cdot  13$ & 255613 & 1 & (65,1)  \\
\hline
$5\cdot 17$ & 3613 & 1 & (17,1) \\
 \hline
$17 \cdot  23$  & 76441 & 2 & (17,1),(1, 76441)\\
 \hline
 $7 \cdot  11 \cdot 13$ & 501001 & 0 & - \\
 \hline
 $3 \cdot  5 \cdot 7\cdot 11 \cdot 19$  & 240791513 & 0 & -\\
 \hline
 $5 \cdot  13 \cdot 3$  & 19013 & 1 & (65,1) \\
 \hline
 $2 \cdot 3 \cdot 11$ & 4357 & 0 & - \\
 \hline
  $2 \cdot  5 \cdot 13$ & 16901 & 2 & (5,1),(13,1) \\
 \hline
  $2 \cdot  7 \cdot 29$ & 164837 & 1 & (29,1)\\
 \hline
  $2 \cdot  5 \cdot 17$ & 28901 & 2 & (5,1),(17,1)\\
  \hline
   $2 \cdot  7 \cdot 17 \cdot 23$ & 29964677 & 1 & (17,1) \\
 \hline
  $2 \cdot  3 \cdot5 \cdot 7 \cdot 11$ & 5336101 & 1 & (5,1) \\
 \hline
  $ 2 \cdot  3 \cdot 5 \cdot 7 \cdot 13$ & 7452901 & 2 & (5,1),(13,1)\\
 \hline
  $ 2 \cdot  3 \cdot 5 \cdot 7 \cdot 37$ & 7452901 & 2 & (5,1),(37,1) \\
 \hline
 $ 2 \cdot  3 \cdot 5 \cdot 7 \cdot 41$ & 74132101 & 2 & (5,1), (41,1)\\
 \hline
  $ 17 \cdot  73$  & 770041 & 3 & (17,1),(73,1),(1,77041) \\
  \hline
 $ 17 \cdot  41\cdot   97 $ & 2285488441 & 4 & (17,1),(41,1),(97,1),(1, 2285488441)\\
 \hline
 $ 2 \cdot  5\cdot   13 \cdot 17 $ & 4884101 & 3 & (5,1),(13,1),(17,1)\\
 \hline
\end{tabular}

\end{table}

\end{document}